\pgfplotsset{compat=1.12}
\newcommand{\rmd}{\mathrm{d}}
\newcommand{\bbE}{\mathbb{E}}\newcommand{\rme}{\mathrm{e}}
\newcommand{\bbR}{\mathbb{R}}
\newcommand{\sfA}{\mathsf{A}}
\newcommand{\sfB}{\mathsf{B}}
\newcommand{\sfI}{\mathsf{I}}
\newcommand{\cB}{\mathcal{B}}
\newcommand{\cF}{\mathcal{F}}
\newcommand{\cN}{\mathcal{N}}
\newcommand{\sign}{\mathsf{sign}}
\newtheorem{thm}{Theorem}
\newtheorem{lem}[thm]{Lemma}
\newtheorem{prop}[thm]{Proposition}
\begin{document}

\title{Multivariate Priors and the Linearity of Optimal Bayesian Estimators under Gaussian Noise}

%
%
%
 \author{%
  \IEEEauthorblockN{Leighton P. Barnes\IEEEauthorrefmark{1},
                    Alex Dytso\IEEEauthorrefmark{2}, Jingbo Liu\IEEEauthorrefmark{3},
                    and H. Vincent Poor\IEEEauthorrefmark{4}}
  \IEEEauthorblockA{\IEEEauthorrefmark{1}%
                    Center for Communications Research,
                    Princeton, NJ 08540, USA,
                    l.barnes@idaccr.org}
   \IEEEauthorblockA{\IEEEauthorrefmark{2}%
                    Qualcomm Flarion Technology, Inc.,
                    Bridgewater, NJ 08807, USA,
                    odytso2@gmail.com}
     \IEEEauthorblockA{\IEEEauthorrefmark{3}%
  University of Illinois, Urbana-Champaign, IL 61820, USA, jingbol@illinois.edu
    }
  \IEEEauthorblockA{\IEEEauthorrefmark{4}%
                    Princeton University, 
                    Princeton, NJ 08544,  USA,
                    poor@princeton.edu}
}

\maketitle


\begin{abstract}
Consider the task of estimating a random vector $X$ from noisy observations $Y = X + Z$, where $Z$ is a standard normal vector, under the $L^p$ fidelity criterion. This work establishes that, for $1 \leq p \leq 2$, the optimal Bayesian estimator is linear and positive definite if and only if the prior distribution on $X$ is a (non-degenerate) multivariate Gaussian. Furthermore, for $p > 2$, it is demonstrated that there are infinitely many priors that can induce such an estimator.
\end{abstract}

\section{Introduction}

Consider a random vector \(X \in \mathbb{R}^n\) that is observed through noisy observation \(Y \in \mathbb{R}^n\) with
\begin{equation}
    Y = X + Z
\end{equation}
and \(Z \in \mathbb{R}^n\) standard normal and independent of \(X\). The optimal Bayesian estimator for estimating \(X\) from \(Y\) with different $L^p$ losses can be described as follows: for \(p, k \ge 1\)
\begin{equation}
    \inf_{f: \, f \text{ measurable}} \mathbb{E} \left[ \ell_{p, k} \left(X - f(Y)\right) \right]\label{eq:gen_estimation}
\end{equation}
where, for \(x \in \mathbb{R}^n\), the loss function is defined as
\begin{equation}
    \ell_{p, k}(x) = \| x \|_k^p,  \text{ with } \| \cdot \|_k = \left(\sum_{i=1}^n |x_i|^k\right)^\frac{1}{k}.
\end{equation}
For the case of \(n=1\), without loss of generality, we can assume that \(k=1\).
Finding the optimal estimator in \eqref{eq:gen_estimation} for all combinations of \(p, k\) is a difficult task, and closed-form expressions are  known only in some special cases. For example, for \(p = k = 2\), the optimal estimator is given by the conditional mean \cite{poor1998introduction}. For the case of \(p  = 1\), the optimal estimator is known as the spatial median  \cite{small1990survey,milasevic1987uniqueness} and  has a closed-form expression only for  \(n = 1\). In the vector case, it is known that for every $n,k$ and $p$ the Gaussian distribution on $X$ induces a linear estimator of the form $ f(Y)= \sfA Y$ where $\sfA \in \bbR^{n \times n}$ is a positive semidefinite matrix.\footnote{For completeness this claim is shown in Proposition~\ref{prop:Gauss_sol} (Section~\ref{sec:orthog_like_cond}). }

In this work, building on our previous work in \cite{barnes20231}, which is concerned with the scalar case, we are interested in studying the converse of this statement. Formally, suppose that \(f_{p,k}(y)\) is the optimal estimator in \eqref{eq:gen_estimation};\footnote{Due to the convexity of \(f \mapsto \mathbb{E}[\ell_{p,k}(X - f(Y))]\), such an estimator exists and is unique for \(p,k>1\) \cite[Prop.~1]{dytso2017minimum}.} our goal is to find the set of priors on the input \(X\) such that for all\footnote{All equalities  in the paper are understood to hold in the almost sure sense.} \(y \in \mathbb{R}^n\)
\begin{equation}
    f_{p,k}(y) = \mathsf{A}y \label{eq:Induces_linarity}
\end{equation}
for some positive semidefinite matrix \(\mathsf{A} \in \mathbb{R}^{n \times n}\).

\subsection{Past Work }
In the scalar case, in \cite{barnes20231} the authors of this paper demonstrated that for  \(1 \le p \le 2\), only a Gaussian prior induces linearity of the optimal estimator, and for \(p>2\), there are infinitely many priors that induce linearity.

For the $L^2$ (i.e., $p,k=2$) error scenario, the identification of the set of distributions on the variable $X$ that would result in a linear conditional mean has been well-established. Specifically, in the case of Gaussian noise,  the sole prior inducing linearity is a Gaussian distribution with zero mean and with  covariance given by  $(\sfI -\sfA)^{-1} \sfA$, denoted as $X \sim \mathcal{N}(0,(\sfI -\sfA)^{-1} \sfA)$; see for example \cite[App.~B]{barnes20231} for four different methods of showing this.   

In the $L^2$ case, this understanding extends beyond Gaussian noise scenarios. For instance, when the conditional distribution $P_{Y|X}$ belongs to an \emph{exponential family}, it is established that the conditional mean is linear if and only if $X$ follows a \emph{conjugate prior} \cite{diaconis1979conjugate,chou2001characterization}. In such cases, the matrix $\sfA$ is determined by  ${\rm Cov}(X,Y) {\rm Cov}^{-1}(Y)$. Conjugate priors find application in modeling various phenomena in statistical and machine learning contexts \cite{bishop2006pattern}.

Addressing additive noise channels, where $Y=X+N$ and $N$ is not necessarily Gaussian, in the scalar case, \cite{akyol2012conditions} outlined necessary and sufficient conditions for linearity in optimal Bayesian estimators for $L^p$ Bayesian risks, where $p$ is restricted to \emph{even} values. Specifically, the authors derived the characteristic function of $X$ in terms of the characteristic function of $N$. However, these findings do not extend to the case $p=1$. The authors of \cite{akyol2012conditions} also derived similar conditions for the vector case but only for the case of $p=k=2$.

In the $L^2$ scenario, alongside uniqueness results, stability outcomes have also been established. Particularly, for Gaussian and Poisson noise models, if the conditional expectation closely approximates a linear function in the $L^2$ distance, the distribution of $X$ must be in proximity to a corresponding prior distribution (Gaussian for Gaussian noise and gamma for Poisson noise) in the \emph{L\'evy distance} \cite{du2018strong,dytso2020estimation}.

\subsection{Contributions and Outline }
The remaining part of this section is dedicated to notation. Section~\ref{sec:Preliminaries} presents preliminary results. For example, in Proposition~\ref{prop:conv_ident} we map our problem to a convolution equation. Section~\ref{sec:main_result} is dedicated to our main theorem. In particular, Theorem~\ref{thm:main_thm} shows that for $1 \le p=k\le 2$, Gaussian is the only prior that induces a linear and positive definite estimator. In Section~\ref{sec:non_trivial_sol} we show that for $p>2$, there are
infinitely many prior distributions that induce such an estimator. 
\subsection{Notation}

We will use the following multi-index notation. For a multi-index $\alpha =(\alpha_1, \ldots, \alpha_d$), we let $|\alpha| = \alpha_1+\ldots + \alpha_d$ and for  $f :\bbR^d \to \bbR$ we define 
\begin{equation}
    \partial_x^\alpha f(x) = \left( \frac{\partial}{ \partial x_1} \right)^{\alpha_1}  \ldots  \left( \frac{\partial}{ \partial x_d} \right)^{\alpha_d} f(x) 
\end{equation}
and  $\alpha! =\alpha_1! \dots \alpha_d!$.    The probability density of a standard scalar Gaussian random variable is denoted by $\phi_0$, and the probability density of an $n$-dimensional standard Gaussian random vector is denoted by $\phi$. 

\section{Preliminaries}
\label{sec:Preliminaries}
In this section, we present some of the required preliminaries. 

\subsection{Orthogonality Like Condition} \label{sec:orthog_like_cond}

The key to most of our proofs will be the following \emph{ orthogonality principle}  like result, which provides a sufficient and necessary condition on $X$ for optimality of the linear estimator \cite{akyol2012conditions}. 

\begin{thm}
   For $p,k \ge 1$, $X$  satisfies \eqref{eq:Induces_linarity} iff for all $y \in \bbR^n$
    \begin{equation}
        \bbE \left[ \ell'_{p,k}  \left(X-  \sfA y\right)  \phi(y-X) \right] = 0_n  \label{eq:Orthog_cond}
\end{equation}
where
\begin{align}
\ell'_{p,k}(x) &= \nabla_x   \| x \|_k^p  = p  \| x \|_k^{p-1}  \nabla _x\| x \|_k  \\
&=  p      \frac{\| x \|_k^{p-1}}{ \| x\|_k^{k-1} }  \left [ \begin{array}{c}     |x_1|^{k-1} \sign(x_1) \\ . \\ .  \\     |x_n|^{k-1} \sign(x_n)    \end{array}  \right ]. 
\end{align} 
\end{thm}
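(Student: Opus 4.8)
The plan is to exploit the fact that the minimization in \eqref{eq:gen_estimation} decouples across the value of the observation. Conditioning on $Y$ first,
\begin{equation}
\bbE\left[\ell_{p,k}(X - f(Y))\right] = \bbE\left[\,\bbE\left[\ell_{p,k}(X - f(Y)) \mid Y\right]\right],
\end{equation}
so the optimal $f_{p,k}$ is obtained by minimizing, separately for (almost) each $y$, the posterior risk $R(c \mid y) := \bbE\left[\ell_{p,k}(X - c) \mid Y = y\right]$ over $c \in \bbR^n$. Since $x \mapsto \|x\|_k^p$ is convex for $p,k \ge 1$, the map $c \mapsto R(c \mid y)$ is convex, and therefore $c^\star$ is a global minimizer if and only if it is a stationary point. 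Consequently $f_{p,k}(y) = \sfA y$ holds for a.e.\ $y$ precisely when $c = \sfA y$ is stationary for $R(\cdot \mid y)$.

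First I would write out this stationarity condition. Using $\nabla_c \ell_{p,k}(X - c) = -\ell'_{p,k}(X - c)$ and differentiating under the conditional expectation,
\begin{equation}
\nabla_c R(c \mid y) = -\,\bbE\left[\ell'_{p,k}(X - c) \mid Y = y\right],
\end{equation}
so stationarity at $c = \sfA y$ is exactly $\bbE\left[\ell'_{p,k}(X - \sfA y) \mid Y = y\right] = 0_n$. Next I would remove the conditioning: with $p_Y(y) := \bbE[\phi(y - X)]$ the marginal density of $Y$, which is strictly positive because $\phi > 0$ everywhere, Bayes' rule gives
\begin{equation}
\bbE\left[g(X) \mid Y = y\right] = \frac{\bbE\left[g(X)\,\phi(y - X)\right]}{p_Y(y)}
\end{equation}
for any integrable $g$. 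Taking $g(X) = \ell'_{p,k}(X - \sfA y)$ and clearing the strictly positive denominator turns the stationarity condition into \eqref{eq:Orthog_cond}, and since every step is an equivalence this proves both directions at once.

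The main obstacle is making the first-order calculus rigorous when $p = 1$ or $k = 1$, where $\|\cdot\|_k^p$ is not differentiable (on the coordinate hyperplanes and at the origin). In that regime I would work with subdifferentials instead of gradients, invoking the identity $\partial_c\, \bbE[\,\cdot \mid Y = y] = \bbE[\partial_c (\cdot) \mid Y = y]$ for convex integrands (justified by a measurable-selection / dominated-convergence argument) and checking that the explicit formula for $\ell'_{p,k}$ is a valid measurable selection from the subdifferential; the posterior mass placed on the non-differentiable set is what must be controlled. One must additionally license the interchange of differentiation and expectation, which follows from the local Lipschitz bound on $\ell_{p,k}$ combined with the Gaussian tails of $\phi$ guaranteeing the needed integrability. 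This integrability and subgradient bookkeeping is the only genuinely delicate part; the rest is the convexity-plus-Bayes skeleton above.
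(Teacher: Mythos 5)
First, note that the paper never proves this theorem: it is imported from \cite{akyol2012conditions} (the scalar analogue is treated in \cite{barnes20231}), so there is no internal proof to compare yours against and it must be judged on its own merits. Your skeleton is the standard argument and is correct as far as it goes: conditioning on $Y$ reduces the problem to minimizing the posterior risk $R(c\mid y)=\bbE[\ell_{p,k}(X-c)\mid Y=y]$ separately for almost every $y$; convexity makes global optimality of $c=\sfA y$ equivalent to first-order optimality; differentiation under the integral sign is licensed because $\ell_{p,k}$ and $\ell'_{p,k}$ grow polynomially while $\phi(y-x)$ decays like a Gaussian (which also makes $R(c\mid y)$ finite for every prior, with no moment assumptions on $X$); and clearing the everywhere-positive density $\bbE[\phi(y-X)]$ via Bayes' rule converts conditional stationarity into \eqref{eq:Orthog_cond}. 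For $p,k>1$, where $\ell_{p,k}$ is $C^1$ and the minimizer is unique, this is a complete and correct proof.

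The gap is in the boundary case $p=1$ or $k=1$, and it sits exactly where you located the difficulty, but your proposed resolution does not close it. Checking that the displayed formula for $\ell'_{p,k}$ (with the convention $\sign(0)=0$) is \emph{a} measurable selection of $\partial \ell_{p,k}$ proves only the implication ``\eqref{eq:Orthog_cond} implies linearity,'' since it exhibits $0_n$ as an element of the set-valued integral $\bbE\left[\partial_c \ell_{p,k}(X-c)\mid Y=y\right]$. The converse is the dangerous direction: optimality of $\sfA y$ guarantees only that \emph{some} selection of the subdifferential has vanishing conditional expectation, and this need not be the specific one appearing in the theorem. Concretely, for $n=1$ and $p=k=1$, a posterior with mass $1/2$ at $0$ and mass $1/2$ at $1$ has $c=0$ as a minimizer of $\bbE\left[\,|X-c|\mid Y=y\right]$, yet $\bbE\left[\sign(X-0)\mid Y=y\right]=1/2\neq 0$. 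To repair this you must show that the posterior assigns zero mass to the non-smoothness set $\left\{x: x_i=(\sfA y)_i \text{ for some } i\right\}$ for Lebesgue-a.e.\ $y$: each coordinate marginal of $P_X$ has at most countably many atoms, so when the rows of $\sfA$ are nonzero the exceptional $y$ form a countable union of hyperplanes, a null set; for every other $y$ the posterior risk is genuinely differentiable at $\sfA y$, its subdifferential is the singleton given by the displayed formula, and your gradient argument applies verbatim. This is consistent with the paper's convention that all equalities hold in the almost-sure sense, and the degenerate case it leaves open (a zero row of $\sfA$) does not arise in the paper's applications, Proposition~\ref{prop:conv_ident} and Theorem~\ref{thm:main_thm}, which assume $\sfA \succ 0$.
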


We now show that a Gaussian prior on $X$ satisfies \eqref{eq:Orthog_cond}. 

\begin{prop}\label{prop:Gauss_sol} Fix some $p,k \ge 1$ and $0 \preceq \sfA \prec \sfI$. Then,  $X \in \mathcal{N}( 0_n, (\sfI -\sfA)^{-1} \sfA)$ satisfies \eqref{eq:Orthog_cond}. 
\end{prop}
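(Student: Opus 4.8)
The plan is to verify \eqref{eq:Orthog_cond} by a direct Gaussian computation: I will show that, under the stated prior, the conditional law of $X$ given $Y=y$ is Gaussian centered at $\sfA y$, so that $X-\sfA y$ is conditionally symmetric about the origin, and then invoke the fact that $\ell'_{p,k}$ is an odd function.

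First I would write out the expectation in \eqref{eq:Orthog_cond} as an integral against the joint density,
\begin{equation}
\bbE\left[\ell'_{p,k}(X-\sfA y)\,\phi(y-X)\right] = \int_{\bbR^n} \ell'_{p,k}(x-\sfA y)\, p_X(x)\,\phi(y-x)\,dx,
\end{equation}
where $p_X$ is the $\mathcal{N}(0_n,\Sigma)$ density with $\Sigma=(\sfI-\sfA)^{-1}\sfA$. The product $p_X(x)\,\phi(y-x)$ is, up to factors depending only on $y$, a Gaussian in $x$ with exponent $-\tfrac12 x^\top(\sfI+\Sigma^{-1})x + x^\top y$. The key algebraic identity is $\Sigma^{-1}=\sfA^{-1}-\sfI$, so that $\sfI+\Sigma^{-1}=\sfA^{-1}$; completing the square then recenters this Gaussian at $\sfA y$ with covariance $\sfA$. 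Pulling the strictly positive $y$-only factor out of the integral and substituting $u=x-\sfA y$ reduces \eqref{eq:Orthog_cond} to showing
\begin{equation}
\int_{\bbR^n} \ell'_{p,k}(u)\,\exp\!\left(-\tfrac12 u^\top \sfA^{-1} u\right)\,du = 0_n.
\end{equation}

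Second, I would close the argument by symmetry. Since $\|u\|_k^p$ is even in $u$, its gradient $\ell'_{p,k}=\nabla_u\|u\|_k^p$ is odd, i.e. $\ell'_{p,k}(-u)=-\ell'_{p,k}(u)$, while the Gaussian weight $\exp(-\tfrac12 u^\top\sfA^{-1}u)$ is even; hence the integrand is odd and the integral over the symmetric domain $\bbR^n$ vanishes. Integrability is not an obstacle here: using $|u_i|^{k-1}\le\|u\|_k^{k-1}$ one obtains the pointwise bound $\|\ell'_{p,k}(u)\|\lesssim\|u\|_k^{p-1}$, which is bounded near the origin (as $p\ge1$) and grows only polynomially, so it is absolutely integrable against the Gaussian weight and the odd-function cancellation is justified.

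I expect the only real subtlety to be the degenerate case in which $\sfA$ is singular (still allowed by $0\preceq\sfA\prec\sfI$): then $\Sigma$ is singular, $p_X$ does not exist as a density on $\bbR^n$, and $\sfA^{-1}$ in the completing-the-square step is undefined. I would handle this either by a limiting argument, replacing $\sfA$ by $\sfA+\varepsilon\sfI$ and letting $\varepsilon\downarrow0$ while invoking the uniform bound above for dominated convergence, or by working directly on $\mathrm{range}(\sfA)$, where the conditional law of $X-\sfA y$ given $Y=y$ remains a (possibly lower-dimensional) zero-mean symmetric Gaussian; the oddness argument then applies verbatim. Everything else is routine algebra.
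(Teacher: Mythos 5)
Your proposal is correct and follows essentially the same route as the paper: both arguments reduce \eqref{eq:Orthog_cond} to the observation that the posterior law of $X$ given $Y=y$ is $\mathcal{N}(\sfA y, \sfA)$ (you obtain this by completing the square with $\Sigma^{-1}=\sfA^{-1}-\sfI$, the paper by quoting the Gaussian conditioning formula), after which oddness of $\ell'_{p,k}$ against the symmetric centered Gaussian forces the expectation to vanish. Your explicit handling of singular $\sfA$ is a nice touch that the paper's density-free, distributional phrasing (writing $X-\sfA y$ conditionally as a linear image of a standard normal) covers implicitly.
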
 
\begin{proof}  First, note that 
 $X|Y =y  \sim \mathcal{N} (\sfA y;  \sfA)$, which implies that the conditional expectation can be written as an unconditional one 
\begin{align}
\bbE \left[   \ell'_{p,k} (X-  \sfA Y)  \mid  Y =y  \right] & =  \bbE \left[   \ell'_{p,k} ( \sfA Z +\sfA y -  \sfA y)    \right]  \\
&=  \bbE \left[   \ell'_{p,k} ( \sfA Z)   \right] .
\end{align} 
Next, since $- Z  \stackrel{d}{=} Z$, we have that 
\begin{equation}
\bbE[   \ell'_{p,k} ( \sfA Z)   ]  =  \bbE[   \ell'_{p,k} ( - \sfA Z)   ]  =  -\bbE[   \ell'_{p,k} ( \sfA Z)   ] 
\end{equation} 
which implies that $\bbE[   \ell'_{p,k} ( \sfA Z)   ]  =0_n$.  
\end{proof}

\subsection{An Equivalent Convolution Identity} 
To show the converse to Proposition~\ref{prop:Gauss_sol}, we re-write the condition in \eqref{eq:Orthog_cond} in terms of a convolution. Let $P_X$ be the probability distribution of $X$.
\begin{prop} \label{prop:conv_ident} Fix some $p,k \ge 1$ and $0 \prec \sfA$. $P_X$ satisfies \eqref{eq:Orthog_cond} iff for all $y \in \bbR^n$
\begin{equation} \label{eq:conv}
0_n  = \int  \tilde{\ell}'_{p,k} ( x-  y)  \phi(y-x) \rmd \mu( x), 
\end{equation} 
where\footnote{For a symmetric and positive semidefinite matrix $\sfA$, the matrix square root is the matrix such that $\sfA = \sfA^{\frac12 }\sfA^{\frac12 }$. Such a matrix always exists, is unique, and is positive semidefinite \cite{horn2012matrix}.  }
\begin{align}
\tilde{\ell}'_{p,k} (x)  &=  \ell'_{p,k} \left(   \sfA^{ \frac{1}{2}} x \right),\\
\rmd \mu( x)&= \rme^{   \frac{ x^T(\sfI -\sfA) x }{ 2} } \,  \rmd  P_ {  \sfA^{ - \frac{1}{2}}  X} ( x) .\label{eq:definition_mu}
\end{align} 
\end{prop}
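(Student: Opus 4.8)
The plan is to derive \eqref{eq:conv} from the orthogonality condition \eqref{eq:Orthog_cond} by a single linear change of variables followed by a completion of the square in the Gaussian kernel. First I would write the expectation in \eqref{eq:Orthog_cond} as an integral against the law $P_X$ and substitute $x = \sfA^{1/2} s$, which pushes $P_X$ forward to $P_{\sfA^{-1/2}X}$; strict positivity $0 \prec \sfA$ is used precisely here, so that $\sfA^{1/2}$ is invertible and $\sfA^{-1/2}$ exists. Using the identity $\sfA^{1/2} s - \sfA y = \sfA^{1/2}(s - \sfA^{1/2} y)$ and the definition $\tilde{\ell}'_{p,k}(x) = \ell'_{p,k}(\sfA^{1/2} x)$, the loss factor becomes $\tilde{\ell}'_{p,k}(s - \sfA^{1/2} y)$.

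Next I would introduce the new free variable $\tilde y = \sfA^{1/2} y$. Since $\sfA^{1/2}$ is a bijection of $\bbR^n$, the assertion ``\eqref{eq:Orthog_cond} holds for all $y$'' is equivalent to ``the transformed identity holds for all $\tilde y$'', so nothing is lost in this reparametrization. After the substitution the loss factor reads $\tilde{\ell}'_{p,k}(s - \tilde y)$, which matches the factor $\tilde{\ell}'_{p,k}(x-y)$ in \eqref{eq:conv} upon renaming, and it remains only to reconcile the two Gaussian kernels.

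The crux is this kernel computation. Expanding the exponent and using symmetry of $\sfA^{\pm 1/2}$ gives $\| \sfA^{-1/2}\tilde y - \sfA^{1/2} s \|^2 = \tilde y^T \sfA^{-1}\tilde y - 2\tilde y^T s + s^T \sfA s$, whereas the target kernel has exponent $\|\tilde y - s\|^2 = \tilde y^T\tilde y - 2\tilde y^T s + s^T s$. The cross terms coincide, so the discrepancy splits cleanly into an $s$-only part, which is exactly the weight $\tfrac12 s^T(\sfI-\sfA)s$ defining $\rmd\mu$ in \eqref{eq:definition_mu}, and a $\tilde y$-only part. Concretely, one obtains
\begin{equation}
\phi\!\left(\sfA^{-1/2}\tilde y - \sfA^{1/2} s\right) = \rme^{-\frac12 \tilde y^T(\sfA^{-1}-\sfI)\tilde y}\,\phi(\tilde y - s)\,\rme^{\frac{s^T(\sfI-\sfA)s}{2}}.
\end{equation}
The leading factor is a strictly positive scalar independent of $s$, so it may be pulled out of the integral and, being nonzero, does not affect whether the integral vanishes; absorbing the trailing factor into $P_{\sfA^{-1/2}X}$ produces $\rmd\mu$. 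Renaming $s \to x$ and $\tilde y \to y$ then yields \eqref{eq:conv}, and the ``iff'' is immediate because every step is an equivalence.

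I expect the only genuine obstacle to be this kernel identity, specifically checking that the leftover factor is truly independent of $s$ (so that extracting it preserves the zero set) and that the $s$-dependent factor reproduces $\rmd\mu$ exactly; both hinge on the cancellation of the $-2\tilde y^T s$ cross term, which in turn uses the symmetry of $\sfA^{1/2}$ and $\sfA^{-1/2}$.
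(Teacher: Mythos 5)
Your proof is correct and is essentially the paper's own argument: the same change of variables $x \mapsto \sfA^{1/2}x$ (pushing $P_X$ forward to $P_{\sfA^{-1/2}X}$), the same reparametrization of $y$ by $\sfA^{1/2}$, and the same splitting of the Gaussian exponent into the cross term $y^Tx$, an $x$-only weight that is absorbed into $\rmd\mu$, and a strictly positive $y$-only factor that can be discarded without changing the zero set. Incidentally, your kernel identity gets the signs right, whereas the paper's intermediate exponent $\|x\|_2^2 + x^T(\sfI-\sfA)x$ contains a sign typo (it should read $\|x\|_2^2 - x^T(\sfI-\sfA)x = x^T\sfA x$), though the final identity \eqref{eq:conv} is unaffected.
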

\begin{proof}
For all $y \in \bbR^n$, we have that 
\begin{align}
0_n  &= \int  \ell'_{p,k} (x-  \sfA y)  \phi(y-x) \, \rmd  P_X(x) \\
\Leftrightarrow  0_n  &= \int  \ell'_{p,k} \left(   \sfA^{ \frac{1}{2}}  (  \sfA^{ -\frac{1}{2}}  x-    \sfA^{ \frac{1}{2}}  y) \right)  \phi(y-  x) \, \rmd P_ {X} ( x) 
\end{align}
where $\sfA^{\frac12}$ denotes the matrix square root, which is well defined since $\sfA$ is symmetric and positive definite.  

Next, by doing a change of variable on $X$, we note that for all $y \in \bbR^n$
\begin{align} 
  0_n  &= \int  \tilde{\ell}'_{p,k} ( \sfA^{ -\frac{1}{2}}  x-  \sfA^{ \frac{1}{2}} y)  \phi(  y-  x) \, \rmd P_ {X} ( x) \\ 
\Leftrightarrow  0_n  &= \int  \tilde{\ell}'_{p,k} ( x-  \sfA^{ \frac{1}{2}} y)  \phi(  y-   \sfA^{ \frac{1}{2}}  x)  \, \rmd P_ {  \sfA^{ - \frac{1}{2}}  X} ( x) \; .
\end{align}
By performing a change of variable on $y$, we have that  
\begin{equation}
 0_n  = \int  \tilde{\ell}'_{p,k} ( x-  y)  \phi(  \sfA^{ -\frac{1}{2}} y-   \sfA^{ \frac{1}{2}}  x) \, \rmd P_ {  \sfA^{ - \frac{1}{2}}  X} ( x) \; .
\end{equation}
Finally, manipulating the exponent, we arrive at 
\begin{align}
 0_n  &= \int  \tilde{\ell}'_{p,k} ( x-  y)  \rme^{ y^T x   - \frac{ \| \sfA^{ \frac{1}{2}}  x \|_2^2 }{ 2} }   \, \rmd P_ {  \sfA^{ - \frac{1}{2}}  X} ( x) \\  
\Leftrightarrow  0_n  &= \int  \tilde{\ell}'_{p,k} ( x-  y)  \rme^{ y^T x   - \frac{ \|   x \|_2^2 + x^T(\sfI -\sfA) x}{ 2}  }   \rmd P_ {  \sfA^{ - \frac{1}{2}}  X} ( x) \\ 
\Leftrightarrow  0_n  &= \int  \tilde{\ell}'_{p,k} ( x-  y)  \phi(y-x) \, \rmd \mu( x) \; . 
\end{align} 
\end{proof} 

\subsection{Tempered Distributions and Other Ancillary Results} 
Our main technique uses the theory of distributions and tempered
distributions from functional analysis, the background of which can be found in \cite{stein2011functional}. 
We now summarize some of the required results.  

\begin{lem} \label{lem:summary:Properties}
\text { }
\begin{itemize}

\item \cite[p.~293]{folland1999real}: Suppose that there exists a constant $C>0$ and an integer $N>0$ such that for all Schwartz-class functions $\varphi$,
\begin{equation}
\left| \int \varphi(x) \rmd \mu(x) \right| \le C\sum_{|\alpha|<N,|\beta|<N}\sup_x |x^\alpha\partial_x^\beta\varphi|\; .
\end{equation}
Then, $\mu$ is a tempered distribution.

\item \cite[Thm.~1.7]{stein2011functional}: If $\nu$ is a tempered distribution supported at the origin, then the Fourier transform $ \widehat{\nu} $ of $\nu$ is given by 
\begin{equation}
\widehat{\nu} (x)  = \sum_{|\alpha| \le N} c_\alpha x^\alpha,
\end{equation}
for some $N<\infty$.
\end{itemize}

\end{lem}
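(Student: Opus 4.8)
The plan is to treat both items as standard facts from the theory of tempered distributions and to indicate how each reduces to a textbook argument, supplying short self-contained sketches rather than merely citing.

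For the first item I would argue directly from the definition of a tempered distribution as a continuous linear functional on the Schwartz space $\mathcal{S}(\bbR^n)$. The topology on $\mathcal{S}(\bbR^n)$ is generated by the countable family of seminorms $\varphi \mapsto \sup_x |x^\alpha \partial_x^\beta \varphi(x)|$, so a linear functional is continuous precisely when it is dominated by a finite linear combination of these seminorms. First I would note that the hypothesized inequality guarantees that $\varphi \mapsto \int \varphi \, \rmd\mu$ is finite on every Schwartz function, hence defines a linear functional on $\mathcal{S}(\bbR^n)$; the very same inequality is then the required continuity estimate, so this functional is tempered. No real obstacle arises here: the content is just unwinding the Fr\'echet-space topology of $\mathcal{S}(\bbR^n)$.

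For the second item I would invoke the structure theorem for distributions concentrated at a point, proved in three steps: (i) a distribution with compact support has finite order, so a $\nu$ supported at the origin has some finite order $N$; (ii) expanding a test function in its Taylor polynomial of degree $N$ about the origin and inserting a cutoff, one shows that $\nu$ annihilates every Schwartz function all of whose derivatives up to order $N$ vanish at $0$, which forces $\nu = \sum_{|\alpha| \le N} c_\alpha \partial_x^\alpha \delta_0$ for some constants $c_\alpha$; (iii) taking Fourier transforms and using that $\widehat{\partial_x^\alpha \delta_0}$ is the monomial $(2\pi \jj x)^\alpha$ (up to the chosen normalization) gives $\widehat{\nu}(x) = \sum_{|\alpha| \le N} c_\alpha' x^\alpha$, a polynomial.

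I expect the main obstacle to be step (ii) of the second item — the localization argument showing that point support together with finite order pins $\nu$ down to a finite combination of derivatives of $\delta_0$. This is where the Taylor-with-remainder estimate interacts with the finite-order bound, and it is the only genuinely nontrivial ingredient; the Fourier computation in step (iii) and the entire first item are routine.
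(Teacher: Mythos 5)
Your proposal is correct: both sketches are the standard arguments, and there is nothing for the paper to disagree with, because the paper offers no proof of this lemma at all --- it is stated purely as a pair of cited textbook facts (Folland for the seminorm-domination criterion, Stein--Shakarchi for the point-support structure theorem). Your sketches reproduce exactly the proofs given in those references, including a correct identification of the one delicate step, namely the localization argument in which the Taylor-remainder estimate must be combined with a scaled cutoff (whose derivatives blow up under scaling) to show that point support together with finite order forces $\nu = \sum_{|\alpha|\le N} c_\alpha \partial_x^\alpha \delta_0$, after which the Fourier computation and the unwinding of the Schwartz-space seminorm topology in the first item are routine.
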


\begin{lem}\label{lem:support_of_FT} Let 
\begin{equation}
  \psi_i(x) =   | x_i|^{k-1}\sign(x_i)\phi_0(x_i)\prod_{j\neq i} \phi_0(x_j)
\end{equation}
   Then, for $k \in [1,2]$, the Fourier transform 
    $\widehat\psi_i(\omega) = 0$ if and only if $\omega_i = 0$.
\end{lem}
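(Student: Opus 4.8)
The plan is to reduce the $n$-dimensional claim to a one-variable statement about a single odd function, and then to prove the nonvanishing by rewriting an oscillatory integral as a manifestly positive one. First I would exploit the product structure of $\psi_i$. Writing $g(t) = |t|^{k-1}\sign(t)\phi_0(t)$, we have $\psi_i(x) = g(x_i)\prod_{j\neq i}\phi_0(x_j)$, a tensor product of one-variable functions, so its Fourier transform factorizes as $\widehat\psi_i(\omega) = \widehat g(\omega_i)\prod_{j\neq i}\widehat\phi_0(\omega_j)$. Since $\widehat\phi_0(\omega_j) = \rme^{-\omega_j^2/2}$ never vanishes, we get $\widehat\psi_i(\omega)=0$ iff $\widehat g(\omega_i)=0$, and the lemma follows once we show the scalar transform $\widehat g$ vanishes exactly at the origin. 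Here $g\in L^1$ by Gaussian decay, so $\widehat g$ is classically defined.

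Because $g$ is odd, $\widehat g(\omega) = -\jj\int_{\bbR} g(t)\sin(\omega t)\,\rmd t$ is purely imaginary and odd; in particular $\widehat g(0)=0$, which settles the ``only if'' direction. The crux is the ``if'' direction: $\widehat g(\omega)\neq 0$ for every $\omega\neq 0$ when $1\le k\le 2$.

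For the interior range $1<k<2$ (so $\nu := k-1\in(0,1)$) I would invoke the subordination identity $\sign(t)|t|^{\nu} = \tfrac{1}{C}\int_0^\infty \sin(tu)\,u^{-1-\nu}\,\rmd u$, valid for all $t\neq 0$, where $C = \int_0^\infty v^{-1-\nu}\sin v\,\rmd v>0$ (positivity of $C$ follows from the Mellin transform of $\sin$). This exhibits $\sign(t)|t|^\nu$ as a \emph{positive} superposition of sines. Substituting into $g$, interchanging the two integrals (justified by absolute convergence), and evaluating the inner Gaussian integrals against $\sin(tu)$ gives
\begin{equation}
\widehat g(\omega) = \frac{-\jj}{2C}\int_0^\infty u^{-1-\nu}\Big[\rme^{-(\omega-u)^2/2} - \rme^{-(\omega+u)^2/2}\Big]\,\rmd u .
\end{equation}
The decisive point is that the bracketed term is strictly positive for every $u>0$ whenever $\omega>0$, because $(\omega+u)^2>(\omega-u)^2$; hence the integral is strictly positive and $\widehat g(\omega)\neq 0$ for $\omega>0$, and by oddness for $\omega<0$ too. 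I expect this to be the main obstacle: converting the sign-indefinite, oscillatory integral defining $\widehat g$ into one with a fixed-sign integrand. The subordination identity is precisely what removes the oscillation carried by the power weight and exposes the positivity.

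Finally I would handle the two endpoints directly. For $k=2$, $g(t)=-\phi_0'(t)$ yields $\widehat g(\omega)=-\jj\,\omega\,\rme^{-\omega^2/2}$, nonzero off the origin. For $k=1$, $g(t)=\sign(t)\phi_0(t)$ and $\widehat g(\omega)=-2\jj\int_0^\infty \phi_0(t)\sin(\omega t)\,\rmd t$; since $\phi_0$ is positive and strictly decreasing on $(0,\infty)$, the classical fact that the sine transform of a positive decreasing function is strictly positive (half-period pairing) closes this case. Combining the three ranges with the factorization from the first step proves the lemma.
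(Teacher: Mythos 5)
Your proof is correct, and its first step is exactly the paper's: exploit the tensor-product structure to factor $\widehat\psi_i(\omega) = \widehat g(\omega_i)\prod_{j\neq i}\rme^{-\omega_j^2/2}$ with $g(t)=|t|^{k-1}\sign(t)\phi_0(t)$, so that the lemma reduces to locating the zeros of the scalar transform $\widehat g$. Where you genuinely part ways is that scalar step: the paper does not prove it at all, it simply cites \cite[Thm.~14]{barnes20231}, whereas you give a self-contained argument via the subordination identity $\sign(t)|t|^{\nu} = C^{-1}\int_0^\infty \sin(tu)\,u^{-1-\nu}\,\rmd u$ with $\nu=k-1\in(0,1)$, a Fubini interchange (your absolute-convergence claim does hold, since $\int_0^\infty|\sin(tu)|u^{-1-\nu}\rmd u$ scales like $|t|^{\nu}$), and the Gaussian product-to-sum evaluation, which exhibits $\widehat g(\omega)$ as $-\jj/(2C)$ times a manifestly positive integral for $\omega>0$; the endpoints $k=2$ (direct computation) and $k=1$ (half-period pairing for the sine transform of a decreasing positive function) are handled correctly. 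Your route buys something the paper's does not: the lemma becomes independent of the prior scalar-case work, the argument is elementary, and the positivity mechanism makes transparent why $k\in[1,2]$ is the right range --- for $\nu>1$ the kernel $u^{-1-\nu}$ is no longer integrable against $\sin$ near the origin, the representation fails, and zeros off the origin genuinely appear, consistent with the Hermite-zero construction of Theorem~\ref{thm13} in this paper. The only blemish is cosmetic: you swapped the labels ``if'' and ``only if'' ($\widehat g(0)=0$ proves the \emph{if} direction of the lemma as stated, and the nonvanishing for $\omega\neq 0$ is the \emph{only if}), but the mathematics attached to each direction is complete and correct.
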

\begin{proof}
    Note that by linearity we have that
    \begin{equation}
    \widehat\psi_i (\omega)  \propto \cF \left\{ | \cdot|^{k-1}\sign(\cdot)\phi_0(\cdot) \right\} (\omega_i)  \prod_{j \neq i} \phi_0(\omega_j)  .
    \end{equation}
    Clearly,  $\prod_{j \neq i} \phi_0(\omega_j) $  is always non-zero. Moreover, the fact that for $  k \in [1,2]$, the function $\omega_i \mapsto \cF \{ | \cdot|^{k-1}\sign(\cdot)\phi_0(\cdot) \} (\omega_i)$ has zeros only at the origin can be found in \cite[Thm.~14]{barnes20231}. 
\end{proof}

\begin{lem} \label{lem:monomial_convolution}
    Given $N<\infty$ and  the sum 
    \begin{equation}
        {\rm mpol}_N(x) = \sum_{|\alpha| \le N} c_\alpha x^\alpha,
    \end{equation}
    suppose that  for all $y\in \bbR^n$ and $i \in \{1,\ldots, n\}$
    \begin{equation}
        \bbE \left[  f(Z_i)    {\rm mpol}_N(Z -y) \right] = 0,
    \end{equation}
    where $f:\bbR \to \bbR$ is a nonzero odd  increasing function. 
    Then,  ${\rm mpol}_N$ is a constant function. 
\end{lem}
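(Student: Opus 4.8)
The plan is to turn the hypothesis into the statement that, for each $i$, a certain polynomial in $y$ vanishes identically, and then to strip off its top total-degree homogeneous part one layer at a time. First I would use the independence of the coordinates of $Z$ to factor the expectation. Writing $A_m(t) = \bbE[f(Z_1)(Z_1-t)^m]$ and $B_m(t) = \bbE[(Z_1-t)^m]$, one has for each $i$
\begin{equation}
P_i(y) \;:=\; \bbE\!\left[f(Z_i)\,{\rm mpol}_N(Z-y)\right] \;=\; \sum_{|\alpha|\le N} c_\alpha\, A_{\alpha_i}(y_i)\prod_{j\ne i}B_{\alpha_j}(y_j),
\end{equation}
which is a polynomial in $y$, and the hypothesis is exactly that $P_i\equiv 0$ for every $i$.

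Next I would record the two facts about $f$ that drive everything. Since $f$ is odd and $Z_1$ is symmetric, $\bbE[f(Z_1)]=0$; and since $f$ is odd, increasing, and not identically zero, the function $z\mapsto f(z)z$ is nonnegative and strictly positive on a set of positive Gaussian measure, so $\bbE[f(Z_1)Z_1]>0$. Expanding $(Z_1-t)^m$ and using $\bbE[f(Z_1)]=0$ then shows that $B_m$ has degree $m$ with leading coefficient $(-1)^m$, while $A_m$ (for $m\ge 1$) has degree exactly $m-1$ with leading coefficient $m(-1)^{m-1}\bbE[f(Z_1)Z_1]\ne 0$, and $A_0\equiv 0$. (All these expectations are finite because the hypothesis already presupposes integrability of $f(Z_i)$ against polynomials in $Z$.)

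The main step is a downward induction on the total degree. Each monomial $c_\alpha x^\alpha$ with $\alpha_i\ge 1$ contributes to $P_i$ a polynomial of degree exactly $|\alpha|-1$, and contributes $0$ if $\alpha_i=0$; hence the degree-$(N-1)$ homogeneous part of $P_i$ receives contributions only from the top layer $|\alpha|=N$ and equals
\begin{equation}
(-1)^{N-1}\,\bbE[f(Z_1)Z_1]\sum_{|\alpha|=N,\ \alpha_i\ge 1} c_\alpha\,\alpha_i\, y^{\alpha-e_i},
\end{equation}
where $e_i$ denotes the unit multi-index with a $1$ in position $i$. Because $P_i\equiv 0$ forces this homogeneous part to vanish, and because $\alpha\mapsto\alpha-e_i$ is a bijection from $\{\alpha:|\alpha|=N,\ \alpha_i\ge 1\}$ onto the degree-$(N-1)$ multi-indices (so distinct $\alpha$ never collide into the same monomial $y^{\alpha-e_i}$), each coefficient must vanish separately; dividing by $\alpha_i\,\bbE[f(Z_1)Z_1]\ne 0$ gives $c_\alpha=0$ whenever $|\alpha|=N$ and $\alpha_i\ge 1$. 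Letting $i$ range over all coordinates kills every coefficient of the top layer, since each $\alpha$ with $|\alpha|=N$ has some positive entry. Replacing $N$ by $N-1$ and repeating removes each successive layer; the recursion terminates at the constant term $c_0$, which is left unconstrained precisely because $A_0\equiv 0$. Hence ${\rm mpol}_N=c_0$ is a constant.

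The one genuinely load-bearing estimate is the strict positivity $\bbE[f(Z_1)Z_1]>0$: oddness alone only guarantees that the very highest term of each $A_{\alpha_i}$ (the one carrying $\bbE[f(Z_1)]$) cancels, and it is monotonicity together with $f\not\equiv 0$ that supplies the nonzero \emph{sub-leading} coefficient needed to divide through and run the induction. The remaining work—the coordinatewise factorization, the degree count, and the multi-index bookkeeping—is routine, so I expect no serious obstacle beyond keeping the scalar leading-coefficient computation clean.
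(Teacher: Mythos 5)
Your proof is correct and follows essentially the same strategy as the paper's: a downward induction on the total degree that kills the top layer of coefficients $\{c_\alpha : |\alpha|=N\}$ using exactly the two moment facts $\bbE[f(Z_1)]=0$ and $\bbE[f(Z_1)Z_1]>0$, then repeats. The only difference is mechanical: where the paper applies $\partial_y^\alpha$ with $|\alpha|=N-1$ and evaluates at $y=0$ (implicitly differentiating under the expectation), you factor $\bbE\left[f(Z_i)\,{\rm mpol}_N(Z-y)\right]$ coordinate-wise into the univariate moment polynomials $A_m,B_m$ and read off the degree-$(N-1)$ homogeneous part---an equivalent way of extracting the same coefficients $c_{\alpha+e_i}\,\bbE[f(Z_i)Z_i]$, with the minor advantage of not needing to interchange differentiation and expectation.
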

\begin{proof}
  
The proof follows by taking a derivative of maximum degree minus one and evaluating it at zero. That is, given an $i$ and a multi-index $\alpha$ such that $|\alpha| =N-1$
\begin{align}
0 
&=    \bbE \left[ f(Z_i)  \partial^{\alpha}_{y} {{\rm mpol}}_N(Z-y) \right]  \Big |_{y=0}\\ 
&= \bbE \left[ C_i f(Z_i) Z_i    \right] ,
\end{align}
where the constant $C_i$  is a nonzero multiple of $c_{\alpha+e_i}$, $e_i$ being the $i$-th standard basis vector.
Since $\bbE \left[ f(Z_i) Z_i    \right]>0$, the above implies that $C_{i }=c_{\alpha+e_i} =0$ for all $i$ and $|\alpha|=N-1$.  Therefore, $ {{\rm mpol}}_N(x)$ is a polynomial of maximum degree $N-1$. Now repeating this procedure $N-1$ times, we arrive at $ {{\rm mpol}}_N(x)=c_0$.
\end{proof}

\section{Main Result} \label{sec:main_result}

The main result of this paper is the following theorem. 
\begin{thm} \label{thm:main_thm}
   Fix some $0 \prec \sfA \prec \sfI$ and assume that $1 \le p =k \le 2$. Then,  $f_{p,k}(y) = \mathsf{A}y, \forall y\in \bbR^n$ if and only if $X \sim \cN \left( 0, (\sfI -\sfA)^{-1} \sfA \right)$. If $\sfA$ has an eigenvalue greater than or equal to one, then $f_{p,k}(y) = \mathsf{A}y$ is an inadmissible estimator. 
\end{thm}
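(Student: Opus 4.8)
The ``if'' direction is already Proposition~\ref{prop:Gauss_sol}, so the work lies in the converse and in the admissibility claim. For the converse the plan is a Fourier-analytic argument built on Proposition~\ref{prop:conv_ident}. Assuming $f_{p,k}(y)=\sfA y$, condition \eqref{eq:Orthog_cond} and Proposition~\ref{prop:conv_ident} give, coordinatewise, a vanishing convolution $\tilde\psi_i \ast \mu = 0$, where $\tilde\psi_i(u)=\tilde\ell'_{p,k,i}(u)\,\phi(u)$ and $\mu$ is the reweighted measure \eqref{eq:definition_mu}. The decisive simplification from $p=k$ is that $\ell'_{p,k}$ is separable: its $i$-th component is $k\,|x_i|^{k-1}\sign(x_i)$, so after the square-root change of variables $\tilde\ell'_{p,k,i}(u)=k\,|a_i^T u|^{k-1}\sign(a_i^T u)$, where $a_i^T$ denotes the $i$-th row of $\sfA^{\frac12}$.

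The first real step is to verify that $\mu$ is a tempered distribution via the seminorm criterion in Lemma~\ref{lem:summary:Properties}, and this is where I expect the main obstacle to be. The difficulty is that the weight $\exp(x^T(\sfI-\sfA)x/2)$ in \eqref{eq:definition_mu} grows, so pairing $\mu$ against a general Schwartz function is not obviously finite. The saving observation is that $0\prec\sfA$ forces this growth to be dominated by the Gaussian kernel; for instance $\int \phi\,\rmd\mu = (2\pi)^{-n/2}\int e^{-x^T\sfA x/2}\,\rmd P_{\sfA^{-1/2}X}\le (2\pi)^{-n/2}$, and a similar domination controls the pairing against the seminorms on the right-hand side of the criterion.

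With $\mu$ tempered I would Fourier transform each identity to obtain $\widehat{\tilde\psi_i}\cdot\widehat\mu=0$, so that $\supp\widehat\mu$ lies in the common zero set of the $\widehat{\tilde\psi_i}$. To locate these zeros despite the coordinate coupling caused by $\sfA^{\frac12}$, I would exploit rotation invariance of $\phi$: rotating $a_i$ onto a coordinate axis turns $\tilde\psi_i$ into a constant multiple of the clean function of Lemma~\ref{lem:support_of_FT}, so $\widehat{\tilde\psi_i}(\omega)=0$ precisely on the hyperplane $\{a_i^T\omega=0\}$. Since the rows $a_i$ of the invertible matrix $\sfA^{\frac12}$ span $\bbR^n$, the intersection of these hyperplanes is $\{0\}$, hence $\widehat\mu$ is supported at the origin. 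By the second bullet of Lemma~\ref{lem:summary:Properties} together with Fourier inversion, $\mu$ coincides with a polynomial $q$, and positivity of $\mu$ gives $q\ge 0$; thus $\sfA^{-1/2}X$ has density proportional to $q(x)\,e^{-x^T(\sfI-\sfA)x/2}$.

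It remains to force $q$ to be constant, and then to treat admissibility. Substituting $\mu=q\,\rmd x$ back into the convolution and setting $t=x-y$ rewrites the identity as $\bbE[f(a_i^T Z)\,q(Z-y)]=0$ for all $y$ and all $i$, with $Z\sim\cN(0,\sfI)$ and $f(s)=|s|^{k-1}\sign(s)$, which is odd and nondecreasing for $k\in[1,2]$ (and in all cases satisfies $\bbE[f(Z)Z]>0$). This is exactly the hypothesis of Lemma~\ref{lem:monomial_convolution}, except that the test directions are the spanning set $\{a_i\}$ rather than the coordinate axes; the same differentiate-and-evaluate argument goes through once one notes, via Stein's identity, that $\bbE[f(a_i^T Z)Z_j]$ is a positive multiple of $(a_i)_j$, so the spanning property kills all top-degree coefficients. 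Hence $q$ is constant, $\sfA^{-1/2}X\sim\cN(0,(\sfI-\sfA)^{-1})$, and therefore $X\sim\cN(0,(\sfI-\sfA)^{-1}\sfA)$. For the final claim, note that everything above used only $0\prec\sfA$, never $\sfA\prec\sfI$; so if $\sfA y$ were optimal for some prior, the same reasoning would force the covariance $(\sfI-\sfA)^{-1}\sfA$, which is not a valid (finite, positive semidefinite) covariance once $\sfA$ has an eigenvalue $\ge 1$. One then makes this concrete by exhibiting a dominating estimator that caps the offending eigenvalue just below one, reducing to the scalar computation of \cite{barnes20231} along that eigendirection to check that the frequentist risk $\bbE_Z\|x-\sfA(x+Z)\|_k^p$ strictly decreases for every $x$, establishing inadmissibility.
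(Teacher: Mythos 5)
Your plan follows the paper's architecture (convolution identity $\to$ temperedness $\to$ Fourier support at the origin $\to$ polynomial $\to$ constant), but the step you yourself flagged as the main obstacle --- temperedness of $\mu$ --- is where your argument genuinely fails. You claim that $0 \prec \sfA$ alone makes the growing weight $\rme^{x^T(\sfI-\sfA)x/2}$ harmless, citing $\int \phi \, \rmd\mu \le (2\pi)^{-n/2}$ and asserting that ``a similar domination controls the pairing against the seminorms.'' This is not true: finiteness of the pairing against a Gaussian does not control pairings against general Schwartz functions, which decay only super-polynomially, not exponentially. Concretely, in $n=1$ with $\sfA = a \in (0,1)$, take $\rmd P_{a^{-1/2}X} \propto \rme^{-(1-a)x^2/2 + |x|}\,\rmd x$ (a perfectly valid probability measure); then $\rmd\mu \propto \rme^{|x|}\rmd x$, which satisfies $\int \phi\,\rmd\mu < \infty$ yet is \emph{not} a tempered distribution (pair it against the Schwartz function $\rme^{-\sqrt{1+x^2}}$). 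So temperedness is not a structural consequence of \eqref{eq:definition_mu} and $0 \prec \sfA \prec \sfI$; it must be extracted from the convolution identity \eqref{eq:conv} itself. That is exactly what the paper does: it integrates the kernel to build the antiderivative $G$, notes $G(t) \ge c\, 1_{\mathcal{R}(1)}(t)$, uses \eqref{eq:conv} and dominated convergence to show $\int G(y-x)\,\rmd\mu(x)$ is a constant independent of $y$, and deduces the volume growth bound $\mu(\cB(R)) \le C_1 R^n$, which is what the seminorm criterion of Lemma~\ref{lem:summary:Properties} actually requires (taking $M=n$). Without this use of the identity, your Fourier-transform step is unjustified because $\widehat\mu$ need not exist.

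The rest of your outline is sound and matches the paper: the same rotation trick reduces each kernel to Lemma~\ref{lem:support_of_FT}, giving $\supp\widehat\mu \subseteq \bigcap_i \{a_i^T\omega = 0\} = \{0\}$, hence $\mu$ is a polynomial. Your Stein-identity treatment of the polynomial-to-constant step is in fact \emph{more} careful than the paper's, which invokes Lemma~\ref{lem:monomial_convolution} after a per-coordinate rotation and glosses over the fact that the test directions are the rows $a_i$ rather than the coordinate axes; your spanning argument is the right way to close that. On the last claim, be aware that the paper proves much less than you propose: it only observes that the would-be prior \eqref{eq:final_result} is improper when $\sfA$ has an eigenvalue $\ge 1$. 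Your proposed dominance construction is extra work, and as sketched it has its own issue: for $k \ne 2$ the loss $\|\cdot\|_k^p$ does not decompose along eigendirections of $\sfA$, so ``cap the eigenvalue and reduce to the scalar computation'' is not a one-line reduction.
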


The proof is given next in a series of steps.

\subsection{$\mu$ is a Tempered Distribution}  
Let $B$ be an orthogonal matrix such that the first row of $\lambda B$ is the same as the first row of $A^\frac{1}{2}$. The constant $\lambda$ is just a normalization factor that is equal to the magnitude of the first row of $A^\frac{1}{2}$. We define the following function:
\begin{align}
G(u) & = -\int_{-\infty}^{u_1} |w|^{k-1}\mathsf{sign}(w)\phi_0(w)\prod_{i\neq 1}\phi_0(u_i) \rmd {w} 
\end{align}
and note that if $H(u) = \frac{\partial}{\partial u_1} G(u)$, then 
\begin{equation}
H( B t) = -\frac{1}{|\lambda|^{k-1}}(\tilde\ell'_{p,k})_1(t)\phi(t) \; .    
\end{equation}

The functions $H$ and $G$ decay sufficiently fast in the sense that
\begin{equation}
    \int \max \{ |H(x)|  , G(x) \}  \rmd \mu(x) <\infty;
\end{equation}
so by the dominated convergence theorem we have that \eqref{eq:conv} implies, for some constant $C$, 
\begin{equation} 
C  = \int  G(y-x) \rmd\mu( x), \, \forall y.
\end{equation}

Let $\cB(r)$ be the Euclidean ball centered at zero with radius $r$, and let $\mathcal{R}(r)$ be the cube centered at zero with side-length $2r$. First, note that  
\begin{equation}
    G(t) \ge  c \,  1_{ \mathcal{R}(1) }(t)
\end{equation}
for some constant $c >0$. 
Second, since $\mu $ is a non-negative measure, it follows that 
\begin{align}
    C &= \int  G(y-x) \rmd\mu( x)\\
    & \ge  c \int  1_{\mathcal{R}(1)}(y-x) \rmd\mu( x)\\ 
    & \ge   c \mu( \mathcal{R}_y(1)) \; .
\end{align}
Next, by taking disjoint unions and summing the corresponding measures, there exists a constant $C_1$ such that 
\begin{equation} \label{eq:ball_meas_bound}
\mu( \cB(R)) \le \mu(\mathcal{R}(R)) \leq C_1 R^n
\end{equation}
for every $R>1$. 

Now for any Schwartz-class function $\varphi$,
\begin{align}
\bigg| \int & \varphi  (x) \rmd \mu( x) \bigg| \\
& \leq \int |\varphi(x)|\left(\frac{1}{1+\|x\|_2^2}\right)^M\left(1+\|x\|_2^2\right)^M \rmd \mu( x) \\
& \leq \int\left(\frac{1}{1+\|x\|_2^2}\right)^M\rmd \mu( x) \sup_x (1+\|x\|_2^2)^M|\varphi(x)| . \label{eq:seminorm}
\end{align}
The first term in \eqref{eq:seminorm} can be bounded as follows:
\begin{align}
\int & \left(\frac{1}{1+\|x\|_2^2}\right)^M\rmd \mu( x) \\
& = \int_0^\infty \mu\left(\left\{x\bigg|\left(\frac{1}{1+\|x\|_2^2}\right)^M > t\right\}\right)\rmd t \\
& = \int_0^1 \mu\left(\mathcal{B}\left(\sqrt{\frac{1}{t^{\frac{1}{M}}}-1}\right)\right) \rmd t \\
& \leq C_1 \int_0^1 \left(\frac{1}{t^{\frac{1}{M}}}-1\right)^\frac{n}{2} \rmd t \label{eq:ball_ineq_step}
\end{align}
where in \eqref{eq:ball_ineq_step} we have used \eqref{eq:ball_meas_bound}. Choosing $M=n$ we have
\begin{equation}
\int \left(\frac{1}{1+\|x\|_2^2}\right)^M\rmd \mu( x) \leq C_1\int_0^1 t^{-\frac{1}{2}} \rmd t = 2C_1 \; .
\end{equation}
Thus
\begin{equation}
\bigg| \int \varphi (x) \rmd \mu( x) \bigg|  \leq 2C_1\sup_x (1+\|x\|_2^2)^M|\varphi(x)| \; ,
\end{equation}
and by Lemma \ref{lem:summary:Properties} we have established that $\mu$ is a tempered distribution.

\subsection{The Prior $\mu$ is Constant for $p =k$ and $1\leq p \leq 2$}
The convolution identity in \eqref{eq:conv} has $n$ components, each of which yields a particular convolution equation. Consider the $i$th of these equations, i.e.,
\begin{equation} 
0 = \int (\tilde\ell'_{p,k})_i(x-y)\phi(x-y)\mu(dx) \; . \label{eq:one_relation} \end{equation}
Now  the convolutional kernel $(\tilde\ell'_{p,k})_i(x)\phi(x)$ depends only on the $i$th row of $\sfA^{\frac{1}{2}}$, and not on any other part of the matrix $\sfA^{\frac{1}{2}}$. We can therefore replace $\sfA^\frac{1}{2}$ with $\lambda_i \sfB_i$ for an orthogonal matrix $\sfB_i$ in the definition of $\tilde\ell'_{p,k}$ for the purposes of analyzing a single relation \eqref{eq:one_relation}. The $i$th row of $\sfB_i$ will be the normalized $i$th row of $\sfA^{\frac{1}{2}}$, and $\lambda_i$ will be the corresponding norm. The rest of the $\sfB_i$ matrix is any completion of the orthonormal basis. Define
\begin{align}
\psi_i(x) & = (\tilde\ell'_{p,k})_i(B_i^{-1}x)\phi(B_i^{-1}x) \\
& = (\tilde\ell'_{p,k})_i(B_i^{-1}x)\phi(x) \\
& = (\tilde\ell'_{p,k})_i(B_i^{-1}x)\phi_0(x_i)\prod_{j\neq i} \phi_0(x_j) \\
& = |\lambda_i x_i|^{k-1}\sign(x_i)\phi_0(x_i)\prod_{j\neq i} \phi_0(x_j) \; .
\end{align}
Taking the Fourier transform of $\psi_i(x)$ (Lemma~\ref{lem:support_of_FT}), we see that $\widehat\psi_i(\omega) = 0$ if and only if $w_i = 0$. Therefore the Fourier transform of the convolutional kernel
\begin{equation}
\psi_i(B_i x) = (\tilde\ell'_{p,k})_i(x)\phi(x)
\end{equation}
is zero at frequency $\omega$ if and only if
\begin{equation} \label{eq:zeros}
 (B_i\omega)_i = (A^{\frac{1}{2}}\omega)_i = 0 \; .
\end{equation}

Since $\mu$ is a tempered distribution, we can take its Fourier transform and effectively use the convolution theorem, from which it is clear from \eqref{eq:zeros} that \eqref{eq:one_relation} implies $\widehat\mu$ must be supported only on $\omega$ such that
$(A^{\frac{1}{2}}\omega)_i= 0$ (for a formal proof using the definition of the support of distributions, see \cite[Lemma~11]{barnes20231}). This being true for all $i=1,\ldots,n$, and $\sfA^{\frac{1}{2}}$ being invertible, we conclude that $\widehat\mu$ is supported only at the origin.

It is a standard result (see Lemma \ref{lem:summary:Properties}), that $\widehat\mu$ being supported only at the origin implies that $\mu$ can be represented as a finite sum of monomials: for $N<\infty$
\begin{equation}
\mu(x) = \sum_{ |\alpha|  \le N }c_{\alpha}x^\alpha \; .
\end{equation}

Now  \eqref{eq:one_relation} can be written as $0 = \bbE [|Z_i|^{k-1}\sign(Z_i) \mu(Z-y) ]$ and for all $i $. Noticing that  $z \mapsto |z|^{k-1} \sign(z)$ is an odd increasing function and using Lemma~\ref{lem:monomial_convolution}, we conclude that  $\mu = c_{0}=c$. Next, from the definition of $\mu$ in \eqref{eq:definition_mu}, we have that
\begin{equation}
 c  =    \rme^{   \frac{ x^T(\sfI -\sfA) x }{ 2} }  \,   \rmd P_ {  \sfA^{ - \frac{1}{2}}  X} ( x),
\end{equation}
which  implies that
\begin{equation}
      \rmd P_ {  X} (x) \propto \rme^{ -   \frac{ x^T \sfA^{ -\frac12   }(\sfI -\sfA) \sfA^{-\frac12} x }{ 2} }= \rme^{ -   \frac{ x^T \sfA^{-1}(\sfI -\sfA)  x }{ 2} }, \label{eq:final_result}
\end{equation}
where the last step is due to the symmetry of $\sfA$. Finally, note that $\rmd P_X $ in \eqref{eq:final_result} is a proper distribution if $\sfA \prec \sfI$, and if $\sfA$ has eigenvalue greater than or equal to one, then  $\rmd P_X $ is not a proper distribution. 

\subsection{Nontrivial Solutions for $p>2$}
\label{sec:non_trivial_sol}

Next, somewhat surprisingly, we show that for $p>2$ there are infinitely many priors that induce linearity.  For simplicity, we focus on the case of $n=1$, while the more general case of $n>1$ can be addressed by a straightforward extension of these ideas. 
\begin{thm}
    \label{thm13}
   Fix a $ p \in (2,\infty)$ and $0<a<1$. Then for every $|\rho| \le 1$ and $\theta \in \bbR$, there exists an $\omega$ such that the density
   \begin{equation}
f_X(x) \propto \rme^{- \frac{1-a}{a} \frac{x^2}{2}} \left(1 + \rho \cos \left( \frac{\omega x}{\sqrt{a}} +\theta \right) \right) \label{eq:lp_densities}
\end{equation}
induces a linear minimum $L^p$ estimator. Moreover,  for even $p$, 
 $\omega$'s are given  by the zeros of the probabilist's Hermite polynomial $H_{e_{p-1}}$. 
\end{thm}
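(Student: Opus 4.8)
The plan is to invoke the scalar convolution reformulation of Proposition~\ref{prop:conv_ident} (with $n=1$, $\sfA = a$, and $k=1$) and to exploit the fact that the Gaussian prefactor in \eqref{eq:lp_densities} is engineered precisely so that the associated measure $\mu$ degenerates into a pure cosine perturbation of Lebesgue measure. Concretely, I would first set $W = X/\sqrt{a}$ and compute its density from \eqref{eq:lp_densities}: the factor $\rme^{-\frac{1-a}{a}\frac{x^2}{2}}$ becomes $\rme^{-\frac{(1-a)w^2}{2}}$ and the argument $\frac{\omega x}{\sqrt{a}}$ becomes $\omega w$. Multiplying by $\rme^{\frac{(1-a)w^2}{2}}$ as prescribed in \eqref{eq:definition_mu} then cancels the Gaussian exactly, yielding $\rmd\mu(w) \propto (1 + \rho\cos(\omega w + \theta))\,\rmd w$. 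This single computation is what makes the whole construction transparent.

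Next I would substitute this $\mu$ into the scalar convolution identity \eqref{eq:conv}, which up to a positive constant reads $0 = \int |x-y|^{p-1}\sign(x-y)\phi_0(x-y)\,(1+\rho\cos(\omega x+\theta))\,\rmd x$ for all $y$. Writing $u = x-y$ and $g(u) = |u|^{p-1}\sign(u)\phi_0(u)$, which is odd and integrable, I would expand $\cos(\omega(u+y)+\theta)$ by the angle-addition formula. The constant term integrates to $0$ since $g$ is odd, the $\cos(\omega u)$ contribution vanishes for the same reason, and what survives is $-\rho\sin(\omega y+\theta)\int g(u)\sin(\omega u)\,\rmd u$. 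As this must hold for every $y$, the whole condition collapses to the single scalar requirement $S(\omega) := \int |u|^{p-1}\sign(u)\phi_0(u)\sin(\omega u)\,\rmd u = 0$, i.e. $\omega$ is a real zero of the imaginary part of the Fourier transform of $g$. Crucially, this requirement involves neither $\rho$ nor $\theta$, so one admissible $\omega$ serves all $|\rho|\le 1$ and all $\theta$ simultaneously.

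To close the existence claim I would argue that for $p>2$ the transform $S$ possesses a nonzero real root. This is exactly the complement of Lemma~\ref{lem:support_of_FT}: for the exponent $k-1=p-1$ with $p>2$, the one-dimensional transform $\cF\{|\cdot|^{p-1}\sign(\cdot)\phi_0\}$ acquires zeros away from the origin, a fact established in \cite[Thm.~14]{barnes20231}. Any such $\omega$ gives a legitimate prior: the factor $1+\rho\cos(\cdot)\ge 0$ precisely because $|\rho|\le 1$, and the Gaussian factor guarantees integrability and normalizability; by the orthogonality principle \eqref{eq:Orthog_cond} together with Proposition~\ref{prop:conv_ident}, the relation $S(\omega)=0$ is equivalent to $f_{p,k}(y)=ay$. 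For even $p$ I would instead make $S$ explicit: there $|u|^{p-1}\sign(u)=u^{p-1}$, so $g(u)=u^{p-1}\phi_0(u)$, and combining $\cF\{u^{m}f\}=i^{m}\frac{d^m}{d\omega^m}\hat f$ with the Rodrigues identity $\frac{d^m}{d\omega^m}\phi_0(\omega)=(-1)^m H_{e_m}(\omega)\phi_0(\omega)$ yields $S(\omega)\propto H_{e_{p-1}}(\omega)\phi_0(\omega)$. Since $\phi_0$ never vanishes, the admissible frequencies are exactly the zeros of $H_{e_{p-1}}$, which are real and include nonzero values for $p\ge 4$.

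I expect the main obstacle to be the existence of a nonzero real zero of $S$ in the general (non-even) regime $p>2$: unlike the even case, where the Hermite structure exhibits the zeros explicitly, this rests on the sharp spectral analysis of $\cF\{|\cdot|^{p-1}\sign(\cdot)\phi_0\}$ from \cite{barnes20231}, and one must verify that its hypotheses genuinely cover every $p>2$ and not merely the even integers. A secondary point requiring care is the interchange of integration and the treatment of $\mu$ as a finite positive measure when verifying \eqref{eq:conv}; here the Gaussian cancellation leaves a \emph{bounded} density against which all the relevant integrals converge absolutely, so this step should be routine.
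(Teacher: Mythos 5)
Your proposal is correct and follows essentially the same route as the paper's proof: cancel the Gaussian factor to reduce the orthogonality/convolution condition to a pure cosine perturbation, observe that oddness kills the constant and cosine terms so that only the condition $\cF\{\sign(\cdot)|\cdot|^{p-1}\phi_0\}(\omega)=0$ survives (independently of $\rho$ and $\theta$), identify the zeros via $H_{e_{p-1}}$ for even $p$, and defer the existence of nonzero real zeros for general $p>2$ to \cite[App.~D]{barnes20231}. The only differences are presentational (you work through Proposition~\ref{prop:conv_ident} and real trigonometric identities where the paper verifies \eqref{eq:Orthog_cond} directly with complex exponentials), so no gap remains.
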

\begin{proof} 
We show that there is an appropriate choice of $\omega$ such that the density in \eqref{eq:lp_densities} satisfies \eqref{eq:Orthog_cond} , which would imply that the above density induces linearity of the conditional $L^p$ estimator.  We have that 
\begin{align}
&\int_{-\infty}^\infty \ell_p'(x-y) \phi(y-x) \exp\left( (1-a) \frac{x^2}{2} \right) f_X(\sqrt{a}x)  \rmd x \notag\\
& =  \int_{-\infty}^\infty \ell_p'(x-y) \phi(y-x) \left(1 +\rho\cos \left( \omega x +\theta\right) \right)  \rmd x\\
& = \rho \int_{-\infty}^\infty \ell_p'(x-y) \phi(y-x)  \cos \left( \omega x +\theta \right)   \rmd x \label{eq:using_that_p_is_even} \\
& = \rho \mathsf{Re} \left \{ \rme^{-j \omega y +j \theta} \mathcal{F} \left( \sign(\cdot) | \cdot  |^{p-1} \phi(\cdot)  \right)(\omega)\right \} 
\end{align}
where  \eqref{eq:using_that_p_is_even} follows from the fact that the function is odd.

For even $p$, the proof is simple and 
\begin{align}
&\mathsf{Re} \left \{ \rme^{-j \omega y +j \theta} \mathcal{F} \left(  ( \cdot  )^{p-1} \phi(\cdot)  \right)(\omega)\right \} \notag\\ 
& = \mathsf{Re} \left \{ \rme^{-j \omega y  +j \theta } (-1)^{p-1} H_{e_{p-1}}(\omega)\phi(\omega)\right \}, \label{eq:using_heremite}
\end{align}
where \eqref{eq:using_heremite} follows by using the identity between the derivative of the Gaussian density and the probabilist's Hermite polynomials $H_{e_k}$. Note that $H_{e_{p-1}}$ has exactly $p-1$ zeros; thus placing $\omega$  at any of these locations will result in \eqref{eq:using_heremite} being equal to zero. The proof for all $p>2$ can be found in \cite[App.~D]{barnes20231}.
\end{proof}

Fig.~\ref{fig:example_lp_dens} shows  examples of the distributions in \eqref{eq:lp_densities} for $p=4$ where we note that  $H_{e_3}$ has zeros at $ \{ \pm \sqrt{3}, 0 \}$. 

\begin{figure}[h!]  
	\centering
\input{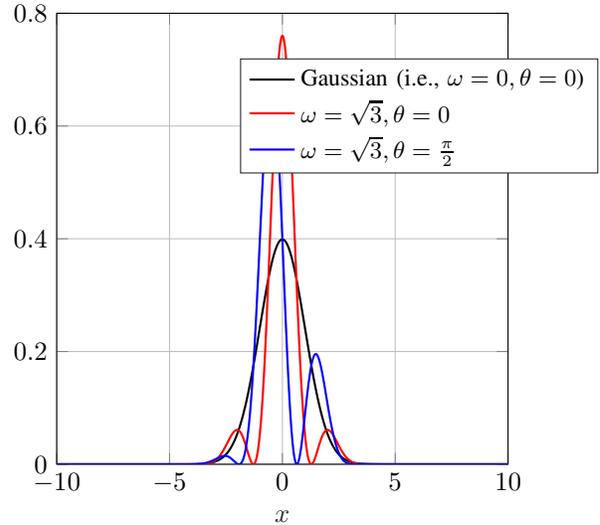}
	\caption{Example of probability densities in \eqref{eq:lp_densities} for $p=4$ and $\rho=1$.}
  \label{fig:example_lp_dens}
\end{figure}

\bibliographystyle{IEEEtran}
\bibliography{refs.bib}
\end{document}